\newtheorem{theorem}{Theorem}
\newtheorem{lemma}[theorem]{Lemma}
\theoremstyle{definition}
\newtheorem{problem}{Problem}
\newtheorem*{update}{Update}
\newcommand{\cgF}{\mathcal{F}}
\newcommand{\cgP}{\mathcal{P}}
\newcommand{\cgQ}{\mathcal{Q}}
\newcommand{\cgS}{\mathcal{S}}
\let\leq\leqslant
\let\geq\geqslant
\def\NN{\mathbb{N}}
\def\RR{\mathbb{R}}
\renewenvironment{enumerate}{\begin{enumorig}[label=\textup{(\roman*)}, noitemsep, topsep=1.5mm plus 1.5mm, leftmargin=*, widest=iii]}{\end{enumorig}}
\let\old@setaddresses\@setaddresses
\def\@setaddresses{\bigskip\bgroup\parindent 0pt\let\scshape\relax\old@setaddresses\egroup}
\title[Triangle-free segment graphs with large chromatic number]{Triangle-free intersection graphs of line segments with large chromatic number}
\author[Pawlik\and Kozik\and Krawczyk\and Laso\'n\and Micek\and Trotter\and Walczak]{Arkadiusz Pawlik\and Jakub Kozik\and Tomasz Krawczyk\and Micha\l{} Laso\'n\and Piotr Micek\and William T. Trotter\and Bartosz Walczak}
\thanks{A journal version of this paper appeared in \emph{J.\ Combin.\ Theory Ser.\ B}, 105:6--10, 2014.}
\thanks{Five authors were supported by Ministry of Science and Higher Education of Poland grant 884/N-ESF-EuroGIGA/10/2011/0 within ESF EuroGIGA project GraDR\@.}
\address[Arkadiusz Pawlik, Jakub Kozik, Tomasz Krawczyk, Piotr Micek, Bartosz Walczak]{Theoretical Computer Science Department, Faculty of Mathematics and Computer Science, Jagiellonian University, Krak\'ow, Poland}
\email{\mtt\{pawlik,jkozik,krawczyk,micek,walczak\mtt\}@tcs.uj.edu.pl}
\address[Micha\l{} Laso\'n]{Theoretical Computer Science Department, Faculty of Mathematics and Computer Science, Jagiellonian University, Krak\'ow, Poland; Institute of Mathematics of the Polish Academy of Sciences, Warsaw, Poland}
\email{michalason@gmail.com}
\address[William T. Trotter]{School of Mathematics, Georgia Institute of Technology, Atlanta, GA 30332, USA}
\email{trotter@math.gatech.edu}
\begin{document}

\begin{abstract}
In the 1970s, Erd\H{o}s asked whether the chromatic number of intersection graphs of line segments in the plane is bounded by a function of their clique number.
We show the answer is no.
Specifically, for each positive integer $k$, we construct a triangle-free family of line segments in the plane with chromatic number greater than $k$.
Our construction disproves a conjecture of Scott that graphs excluding induced subdivisions of any fixed graph have chromatic number bounded by a function of their clique number.
\end{abstract}

\maketitle

\section{Introduction}

A \emph{proper coloring} of a graph is an assignment of colors to the vertices of the graph such that no two adjacent ones are assigned the same color.
The minimum number of colors sufficient to color a graph $G$ properly is called the \emph{chromatic number} of $G$ and denoted by $\chi(G)$.
The maximum size of a clique (a set of pairwise adjacent vertices) in a graph $G$ is called the \emph{clique number} of $G$ and denoted by $\omega(G)$.
It is clear that $\chi(G)\geq\omega(G)$.
A class of graphs is \emph{$\chi$-bounded} if there is a function $f\colon\NN\to\NN$ such that $\chi(G)\leq f(\omega(G))$ holds for any graph $G$ from the class\footnotemark.
\footnotetext{This notion has been introduced by Gy\'arf\'as \cite{Gya87}, who called the class \emph{$\chi$-bound} and the function \emph{$\chi$-binding}.
However, the term \emph{$\chi$-bounded} seems to be better established in the modern terminology.}
A \emph{triangle} is a clique of size $3$.
A graph is \emph{triangle-free} if it does not contain any triangle.

There are various constructions of triangle-free graphs with arbitrarily large chromatic number.
The first one was given by Zykov \cite{Zyk49}, and the one perhaps best known is due to Mycielski \cite{Myc55}.
On the other hand, in the widely studied class of perfect graphs, which include interval graphs, split graphs, chordal graphs, comparability graphs, etc., the chromatic number and the clique number are equal.
In particular, these graphs are $2$-colorable when triangle-free.

The \emph{intersection graph} of a family of sets $\cgF$ is the graph with vertex set $\cgF$ and edge set consisting of pairs of intersecting elements of $\cgF$.
For simplicity, we identify the family $\cgF$ with its intersection graph.

The study of the relation between $\chi$ and $\omega$ for geometric intersection graphs was initiated by Asplund and Gr\"unbaum \cite{AG60}.
They proved that the families of axis-aligned rectangles in the plane are $\chi$-bounded.
On the other hand, Burling \cite{Bur65} showed that the triangle-free families of axis-aligned boxes in $\RR^3$ have arbitrarily large chromatic number.

Paul Erd\H{o}s asked in the 1970s\footnote{An approximate date confirmed in personal communication with Andr\'as Gy\'arf\'as and J\'anos Pach; see also \cite[Problem 1.9]{Gya87} and \cite[Problem 2 in Section 9.6]{BMP-book}.} whether the families of line segments in the plane are $\chi$-bounded.
Kratochv\'{\i}l and Ne\v{s}et\v{r}il generalized this question to the families of curves in the plane any two of which intersect at most once, see \cite{KN95}.
McGuinness \cite{McG00} conjectured that the families $\cgF$ of bounded arcwise connected sets in the plane such that for any $S,T\in\cgF$, the set $S\cap T$ is arcwise connected or empty are $\chi$-bounded.

Some important special cases of Erd\H{o}s's question are known to have positive solutions.
In particular, Suk \cite{Suk14} proved that the families of segments intersecting a common line and the families of unit-length segments are $\chi$-bounded, improving results of McGuinness for triangle-free families \cite{McG00}.

We show that the answer to Erd\H{o}s's question is negative.
Namely, for every positive integer $k$, we construct a triangle-free family $\cgS$ of line segments in the plane such that $\chi(\cgS)>k$.

A related question of Erd\H{o}s\footnote{See \cite[Problem 1.10]{Gya87}.} asks whether the class of complements of intersection graphs of line segments in the plane is $\chi$-bounded.
Here the answer is positive as shown by Pach and T\"or\H{o}csik \cite{PT94}.

As first observed by Fox and Pach \cite{FP-priv}, our result disproves a purely graph-theoretical conjecture of Scott \cite{Sco97} that for every graph $H$, the class of graphs excluding induced subdivisions of $H$ is $\chi$-bounded.
To see this, take $H$ being the $1$-subdivision of a non-planar graph, and note that no subdivision of such $H$ is representable as an intersection graph of segments.

\section{Proof}\label{sec:segments}

\begin{theorem}\label{thm:main}
For every integer\/ $k\geq 1$, there is a family\/ $\cgS$ of line segments in the plane with no three pairwise intersecting segments and with\/ $\chi(\cgS)>k$.
\end{theorem}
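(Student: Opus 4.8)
The plan is to prove, by induction on $k$, a stronger statement tailored to make the induction run, following the spirit of Burling's $3$-dimensional box construction but realized with segments in the plane. I would carry along with each family $\cgS$ a finite nonempty set $\cgP$ of \emph{probes}: pairwise disjoint segments, each disjoint from every member of $\cgS$ and each equipped with a designated subset $D(p)\subseteq\cgS$ and a small insertion region with the property that a single new segment placed through that region crosses exactly the members of $D(p)$ (and $p$) and nothing else. The invariant $I(k)$ to maintain is: there is a triangle-free pair $(\cgS_k,\cgP_k)$ as above such that in every proper coloring $c$ of $\cgS_k$ some probe $p$ has $|c(D(p))|\geq k$. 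Note $I(k)$ already forces $\chi(\cgS_k)\geq k$, since the colors appearing on $D(p)$ are among all colors used, so every proper coloring uses at least $k$ colors. The base case $I(1)$ is a single segment with one probe whose designated subset is that segment.

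The core is the amplification step turning $I(k)$ into $I(k{+}1)$. I would take one \emph{master} copy of $\cgS_k$ and, for each probe $p$ of the master, actually insert the promised new segment $\hat p$, so that $\hat p$ becomes adjacent to exactly $D(p)$ while the $\hat p$'s are pairwise disjoint. Around these inserted segments I would place many disjoint \emph{slave} copies of $\cgS_k$, positioned so that every $\hat p$ lies in the designated subset of the relevant slave probes, and the slaves' probes, suitably relabelled, become $\cgP_{k+1}$. In a proper coloring, the master forces $\geq k$ colors on some $D(p)$, so the inserted $\hat p$ must take a color outside that set. Using enough slave copies, a pigeonhole argument over the possible colors of the $\hat p$'s guarantees a slave whose internally forced $k$ colors (at some slave probe $p'$) avoid the color of the $\hat p$ it sees; then $D(p')$ together with $\hat p$ exhibits $\geq k+1$ colors, which is exactly $I(k{+}1)$.

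The main obstacle is the geometric realization: the entire recursion must be carried out with genuine straight line segments, not merely with arcs obeying McGuinness's connectivity condition, all while staying triangle-free, whereas Burling's construction has the freedom of $\RR^3$. I would place the copies in thin, well-separated horizontal slabs and make each probe a short nearly-vertical segment, so that a segment inserted at a probe crosses precisely its designated subset and meets no probe or segment of any other copy. Triangle-freeness then localizes: the only segments that can pairwise cross are the few meeting near a single probe, and the transversal placement prevents any three of them from being mutually crossing. The delicate point, where I expect the bulk of the work, is choosing coordinates so that all the prescribed crossings and all the prescribed non-crossings hold simultaneously at every level of the recursion; this is precisely what forces the use of straight segments rather than arbitrary curves.

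Finally, iterating the amplification $k$ times from $I(1)$ yields $I(k{+}1)$, hence a triangle-free family $\cgS=\cgS_{k+1}$ with $\chi(\cgS)\geq k+1>k$, which proves the theorem.
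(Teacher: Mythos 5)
Your overall framework (induction on $k$, carrying a family of probes, each probe certifying an independent ``designated set'' on which every proper coloring is forced to use at least $k$ colors) is exactly the shape of the paper's key lemma, but your amplification step has a genuine gap: the pigeonhole argument cannot work. The only slave copies that matter are those attached to the one master probe $p$ that the given coloring happens to force, and all of them see the same inserted segment $\hat p$, hence the same single color $\gamma=c(\hat p)$. Since the slaves are pairwise disjoint and disjoint from everything else, an adversary may color every one of them with one and the same proper coloring, chosen so that each slave probe on which $k$ colors appear has $\gamma$ among those colors. Nothing in your invariant $I(k)$ excludes such colorings --- in your own base case the unique segment can simply be colored $\gamma$ --- and no number of slave copies helps, because they can all be defeated by the identical coloring. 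Nor can you pigeonhole ``over the possible colors of the $\hat p$'s'': the palette of a proper coloring is unbounded, and only the $\hat p$ at the forced master probe carries any usable information. So $I(k+1)$ does not follow from $I(k)$ by your step.

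The missing idea, which is the crux of the paper's proof, is to make the \emph{new} probes see the master's forced set as well as the slave's, so that no pigeonhole is needed. The paper places each auxiliary copy inside the segment-free part (the ``root'') of a master probe $P$ and lets every new probe run to the right boundary of the enclosing rectangle, so it crosses the master's set $\cgS(P)$ in addition to what it meets inside. Writing $\cgS_P(Q)$ for the inner copy's forced set at its probe $Q$ and $D_Q$ for the diagonal inserted across $Q$, one takes two new probes: a lower one seeing $\cgS(P)\cup\cgS_P(Q)$ and an upper one seeing $\cgS(P)\cup\{D_Q\}$. The coloring argument is then a clean dichotomy: if the color sets on $\cgS(P)$ and $\cgS_P(Q)$ differ, the lower probe already sees $k+1$ colors; if they coincide, then $D_Q$, which crosses all of $\cgS_P(Q)$, must receive a color outside that common set, and the upper probe sees $k+1$ colors. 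Crucially, both unions are still independent sets, since the diagonal crosses only the inner copy's segments and never $\cgS(P)$; this is what keeps the probe conditions, and hence triangle-freeness, intact. (By contrast, the shortcut of letting a new probe see $D(p)\cup\{\hat p\}$ in your notation is illegal precisely because $\hat p$ crosses everything in $D(p)$, so that set is not independent.) Finally, the geometric realization you expect to be the bulk of the work is in fact routine once the construction is organized this way --- probes are thin axis-parallel rectangles entering from the right, inserted segments are diagonals of rectangles --- and the real difficulty of the theorem sits entirely in the combinatorial step that your proposal elides.
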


We actually prove a stronger and more technical lemma, which admits a relatively compact inductive proof.

Let $\cgS$ be a family of line segments contained in the interior of a rectangle $R=[a,c]\times[b,d]$.
A rectangle $P=[a',c]\times[b',d']$ is a \emph{probe} for $(\cgS,R)$ if the following conditions are satisfied:
\begin{enumerate}
\item We have $a<a'<c$ and $b<b'<d'<d$.
Note that the right boundary of $P$ lies on the right boundary of $R$.
\item No line segment in $\cgS$ intersects the left boundary of $P$.
\item No line segment in $\cgS$ has an endpoint inside or on the boundary of $P$.
\item The line segments in $\cgS$ intersecting $P$ are pairwise disjoint.
\end{enumerate}
The reader should envision a probe as a thin rectangle entering $R$ from the right and intersecting an independent set of line segments.
The purpose of the restriction on the left boundary of the probe is to simplify the details of the construction to follow.
The rectangle $[a',c']\times[b',d']$ with maximum $c'$ that is internally disjoint from every line segment in $\cgS$ is the \emph{root} of $P$.

In the argument below, we construct a family of pairwise disjoint probes for $(\cgS,R)$.
We illustrate such a configuration in Figure~\ref{fig:probes}.

\begin{figure}[t]
\begin{center}
\includegraphics[scale=.53]{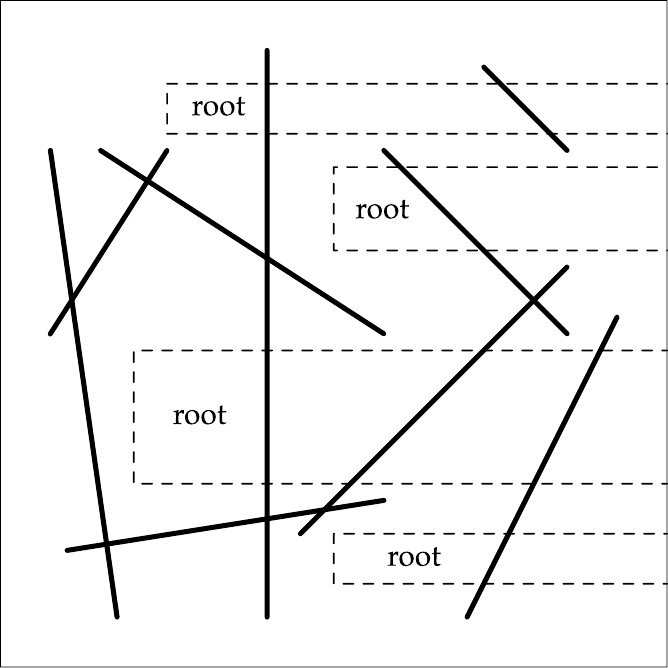}
\end{center}
\caption{Segments, probes and roots}
\label{fig:probes}
\end{figure}

We define sequences $(s_i)_{i\in\NN}$ and $(p_i)_{i\in\NN}$ by induction, setting $s_1=p_1=1$, $s_{i+1}=(p_i+1)s_i+p_i^2$, and $p_{i+1}=2p_i^2$.

\begin{lemma}\label{lem:induction}
Let\/ $k\geq 1$ and\/ $R$ be an axis-aligned rectangle with positive area.
There is a triangle-free family\/ $\cgS_k$ of\/ $s_k$ line segments in the interior of\/ $R$ and a family\/ $\cgP_k$ of\/ $p_k$ pairwise disjoint probes for\/ $(\cgS_k,R)$ such that for any proper coloring\/ $\phi$ of\/ $\cgS_k$, there is a probe\/ $P\in\cgP_k$ for which\/ $\phi$ uses at least\/ $k$ colors on the segments in\/ $\cgS_k$ intersecting\/ $P$.
\end{lemma}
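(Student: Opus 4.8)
The plan is to prove the lemma by induction on $k$; the theorem then follows immediately by applying the lemma with $k+1$ in place of $k$, since a probe on which $\phi$ uses at least $k+1$ colors forces $\chi(\cgS_{k+1})>k$, while the family is triangle-free by construction. For the base case $k=1$ I would place a single segment in the interior of $R$ together with one thin probe crossing it; conditions (i)--(iv) are immediate, every proper coloring uses one color on that segment, and $s_1=p_1=1$.

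For the inductive step, assuming the statement for $k$, I would assemble the level-$(k+1)$ family from nested copies of the level-$k$ family. First place one \emph{outer} copy $(\cgS^0,\cgP^0)$ with $\cgP^0=\{P_1,\dots,P_{p_k}\}$ inside $R$, and let $R_1,\dots,R_{p_k}$ be the pairwise disjoint roots of its probes. Inside each root $R_i$ place a scaled-down \emph{inner} copy $(\cgS^i,\cgP^i)$ with $\cgP^i=\{P^i_1,\dots,P^i_{p_k}\}$. For every pair $(i,j)$ add one \emph{connector} segment $\sigma^i_j$ crossing exactly the inner segments that $P^i_j$ crosses, and introduce two probes: a \emph{short} probe $A^i_j$ crossing those same inner segments together with the outer segments crossed by $P_i$ (but avoiding $\sigma^i_j$), and a \emph{long} probe $B^i_j$ crossing $\sigma^i_j$ together with the outer segments crossed by $P_i$ (but avoiding the inner segments). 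This yields $(p_k+1)s_k+p_k^2=s_{k+1}$ segments and $2p_k^2=p_{k+1}$ probes, matching the recurrences.

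The coloring argument is then short. Fix a proper coloring $\phi$. Applying the inductive hypothesis to each inner copy gives, for every $i$, an index $j_i$ such that the inner segments crossed by $P^i_{j_i}$ show a color set $C_i$ with $|C_i|\geq k$; applying it to the outer copy gives an index $i_0$ such that the outer segments crossed by $P_{i_0}$ show a color set $D_{i_0}$ with $|D_{i_0}|\geq k$. Now look at the short probe $A^{i_0}_{j_{i_0}}$: it sees the colors $C_{i_0}\cup D_{i_0}$, so if these number at least $k+1$ we are done. Otherwise $|C_{i_0}|=|D_{i_0}|=k$ and $C_{i_0}=D_{i_0}$; but $\sigma^{i_0}_{j_{i_0}}$ is adjacent to every inner segment crossed by $P^{i_0}_{j_{i_0}}$, so $\phi(\sigma^{i_0}_{j_{i_0}})\notin C_{i_0}=D_{i_0}$, and the long probe $B^{i_0}_{j_{i_0}}$ then sees $D_{i_0}\cup\{\phi(\sigma^{i_0}_{j_{i_0}})\}$, i.e.\ $k+1$ colors.

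I expect the geometry, not the combinatorics, to be the main obstacle. The clean case analysis above relies on realizing each configuration so that all four probe conditions hold simultaneously: the inner copies and connectors must fit inside the roots $R_i$; each connector $\sigma^i_j$ must cross all inner segments of $P^i_j$ while the short probe threads through the same region crossing those inner segments yet missing $\sigma^i_j$, and the long probe does the reverse; and both new probes must extend rightward to the boundary of $R$, picking up exactly the outer segments crossed by $P_i$ and nothing else. Keeping the whole family free of three pairwise crossing segments is then a matter of checking that each connector meets only a pairwise-disjoint set of inner segments and that segments from different copies lie in disjoint regions; the delicate point is the simultaneous placement of the connectors and the two probe lanes, which I would handle by taking the inner copies very small and the probes very thin.
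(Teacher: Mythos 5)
Your proposal is correct and takes essentially the same approach as the paper: an outer copy of the level-$k$ family with inner copies nested in the probe roots, one connector segment per inner probe (the paper's diagonal $D_Q$), a short/lower and a long/upper probe per connector, and the identical two-case color-counting argument (distinct color sets force $k+1$ colors on the lower probe; equal color sets force a new color on the diagonal, seen by the upper probe). The geometric realization you defer is handled in the paper exactly as you anticipate: the connector is taken to be the diagonal of the inner probe $Q$, which automatically crosses all segments piercing $Q$ and nothing else (every such segment crosses both horizontal edges of $Q$), and the two new probes are thin strips hugging the bottom and top edges of $Q$, extended to the right boundary of $R$.
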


\begin{proof}
The proof goes by induction on $k$.
For the base case $k=1$, we pick any non-horizontal segment inside $R$ as the only member of $\cgS_1$.
The probe in $\cgP_1$ is any rectangle contained in $R$, touching the right boundary of $R$ and piercing the chosen segment with its lower and upper boundaries.

Now goes the induction step: for a given rectangle $R$, we construct $\cgS_{k+1}$ and $\cgP_{k+1}$.
First, we draw a family $\cgS=\cgS_k$ inside $R$ and let $\cgP=\cgP_k$ be the associated set of probes, as claimed by the induction hypothesis.
Then, for each probe $P\in\cgP$, we place another copy $\cgS_P$ of $\cgS_k$ with set of probes $\cgQ_P$ inside the root of $P$.
Finally, for every $P\in\cgP$ and every $Q\in\cgQ_P$, we draw the \emph{diagonal} of $Q$, that is, the line segment $D_Q$ from the bottom-left corner of $Q$ to the top-right corner of $Q$.
Note that $D_Q$ crosses all segments pierced by $Q$ and no other segment.
The family $\cgS_{k+1}$ consists of the line segments from the $p_k+1$ copies of $\cgS_k$ and the $p_k^2$ diagonals, so the total number of segments is $(p_k+1)s_k+p_k^2=s_{k+1}$.

Now we show how the set of probes $\cgP_{k+1}$ is constructed.
For $P\in\cgP$ and $Q\in\cgQ_P$, let $\cgS(P)$ be the segments in $\cgS$ that intersect $P$ and $\cgS_P(Q)$ be the segments in $\cgS_P$ that intersect $Q$.
For each $P\in\cgP$ and each $Q\in\cgQ_P$, we put two probes into $\cgP_{k+1}$:\ a \emph{lower probe} $L_Q$ and an \emph{upper probe} $U_Q$, both lying inside $Q$ extended to the right boundary of $R$.
We choose the lower probe $L_Q$ very close to the bottom edge of $Q$ and thin enough so that it intersects all segments in $\cgS_P(Q)$ but not the diagonal $D_Q$.
We choose the upper probe $U_Q$ very close to the top edge of $Q$ and thin enough so that it intersects the diagonal $D_Q$ but not the segments in $\cgS_P(Q)$.
Both $L_Q$ and $U_Q$ end at the right boundary of $R$, as required by the definition of probe, and thus intersect also the segments in $\cgS(P)$.
Note that $L_Q$ and $U_Q$ are disjoint.
By the induction hypothesis and the placement of $\cgS_P$ inside the root of $P$, the sets $\cgS(P)\cup\cgS_P(Q)$ and $\cgS(P)\cup\{D_Q\}$ are both independent, so $L_Q$ and $U_Q$ indeed satisfy the conditions for being probes.
See Figure~\ref{fig:diagonal} for an illustration.
Clearly, the probes in $\cgP_{k+1}$ are pairwise disjoint and their total number is $2p_k^2=p_{k+1}$.

\begin{figure}[t]
\begin{center}
\includegraphics[scale=.53]{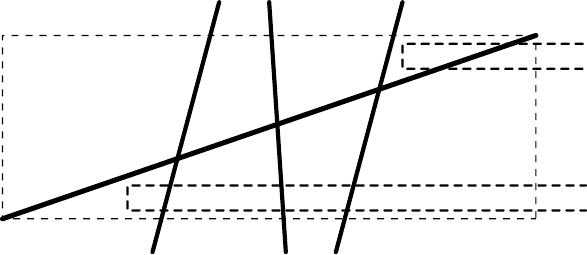}
\end{center}
\caption{A diagonal with lower and upper probes}
\label{fig:diagonal}
\end{figure}

The family $\cgS_{k+1}$ is triangle-free, because we constructed $\cgS_{k+1}$ by taking disjoint copies of triangle-free families and adding diagonals intersecting independent sets of segments.
Let $\phi$ be a proper coloring of $\cgS_{k+1}$.
We show that there is a probe in $\cgP_{k+1}$ for which $\phi$ uses at least $k+1$ colors on the line segments in $\cgS_{k+1}$ intersecting that probe.
Consider the restriction of $\phi$ to $\cgS$, the original copy of $\cgS_k$ used to launch the construction.
There is a probe $P\in\cgP$ such that $\phi$ uses at least $k$ colors on the line segments in $\cgS$ intersecting $P$.
Now, consider $\cgS_P$, the copy of $\cgS_k$ put inside the root of $P$.
Again, there is a probe $Q\in\cgQ_P$ such that $\phi$ uses at least $k$ colors on the segments in $\cgS_P$ intersecting $Q$.
If $\phi$ uses different sets of colors on the segments intersecting $P$ and the segments intersecting $Q$, then at least $k+1$ colors are used on the segments pierced by the lower probe $L_Q$.
If $\phi$ uses the same set of colors on the segments intersecting $P$ and those intersecting $Q$, then another color must be used on the diagonal $D_Q$, and thus $\phi$ uses at least $k+1$ colors on the segments intersecting the upper probe $U_Q$.
\end{proof}

The smallest family $\tilde\cgS_k$ of segments satisfying the conclusion of Theorem \ref{thm:main} that we know is obtained by taking all segments from the family $\cgS_k$ constructed above and adding the diagonals of all probes in $\cgP_k$.
It is indeed triangle-free as the segments in $\cgS_k$ intersecting every probe form an independent set.
Since every proper coloring of $\cgS_k$ uses at least $k$ colors on the segments intersecting some probe, the diagonal of this probe must receive yet another color, which yields $\chi(\tilde\cgS_k)>k$.
By an argument similar to that in the proof of Lemma \ref{lem:induction}, we can show that the family $\tilde\cgS_k$ is $(k+1)$-critical, which means that $\chi(\tilde\cgS_k)=k+1$ and removing any segment from $\tilde\cgS_k$ decreases the chromatic number to $k$.

It should be noted that the family $\tilde\cgS_k$ and the triangle-free family of axis-aligned boxes in $\RR^3$ with chromatic number greater than $k$ constructed by Burling \cite{Bur65} yield the same intersection graph.

\section{Remarks}\label{sec:remarks}

An easy generalization of the presented construction shows the following: for every arcwise connected compact set $S$ in the plane that is not an axis-aligned rectangle, the triangle-free families of sets obtained from $S$ by translation and independent scaling in two directions have unbounded chromatic number.
In particular, this gives the negative answer to a question of Gy\'arf\'as and Lehel \cite{GL85} whether the families of axis-aligned L-shapes are $\chi$-bounded.
For some sets $S$ (e.g.\ circles and square boundaries), we can even restrict the transformations to translation and uniform scaling and still obtain graphs with arbitrarily large chromatic number.
We discuss this matter in a follow-up paper \cite{PKK+13}.

\section{Problems}\label{sec:problems}

\begin{problem}
What is (asymptotically) the maximum chromatic number of a triangle-free family of $n$ segments in the plane?
\end{problem}

The size of the triangle-free family $\tilde\cgS_k$ of segments with $\chi(\tilde\cgS_k)>k$ that we construct in Section \ref{sec:segments} is $s_k+p_k$.
It easily follows from the inductive definition of $p_k$ and $s_k$ that
\begin{equation*}
2^{2^{k-1}-1}=p_k\leq s_k\leq 2^{2^{k-1}}-1.
\end{equation*}
Therefore, we have $|\tilde\cgS_k|=\Theta(2^{2^{k-1}})$.
This shows that the maximum chromatic number of a triangle-free family of segments of size $n$ is of order $\Omega(\log\log n)$.
On the other hand, the bound of $O(\log n)$ follows from a result of McGuinness \cite{McG00}.

\begin{problem}
\label{prob:2}
Is there a constant $c>0$ such that every triangle-free family of $n$ segments in the plane contains an independent subfamily of size at least $cn$?
\end{problem}

By the above-mentioned bound $\chi(\cgS)=O(\log n)$, any triangle-free family $\cgS$ of segments of size $n$ contains an independent subfamily of size $\Omega(n/\log n)$.

\begin{update}
Walczak \cite{Wal15} proved the negative answer to the question in Problem~\ref{prob:2}.
\end{update}

The chromatic number of segment intersection graphs containing no triangles and no $4$-cycles is bounded, as shown by Kostochka and Ne\v{s}et\v{r}il \cite{KN98}.
The following problem has been proposed by Jacob Fox.

\begin{problem}
\label{prob:3}
Do the families of segments in the plane containing no triangles and no $5$-cycles have bounded chromatic number?
\end{problem}

\begin{update}
We have learned from Sean McGuinness that the positive answer to the question in Problem \ref{prob:3} is a direct corollary to the following result of him \cite[Theorem 5.3]{McG01}: if a triangle-free segment intersection graph has large chromatic number, then it contains a vertex $v$ with the property that the vertices at distance $2$ from $v$ induce a subgraph with large chromatic number.
Indeed, any two adjacent vertices at distance $2$ from $v$ witness a $5$-cycle.
Iterated application of McGuinness's result shows that for any $\ell\geq 5$, triangle-free segment intersection graphs with chromatic number large enough contain a cycle of length $\ell$.
We are grateful to Sean for these observations.
\end{update}

\section*{Acknowledgments}

We thank Jacob Fox and J\'anos Pach for their helpful remarks and advice.

\bibliographystyle{plain}
\bibliography{segments}

\end{document}